\begin{document}
\newtheorem{theorem}{Theorem}[section]
\newtheorem{lemma}[theorem]{Lemma}
\newtheorem{definition}[theorem]{Definition}
\newtheorem{conjecture}[theorem]{Conjecture}
\newtheorem{proposition}[theorem]{Proposition}
\newtheorem{algorithm}[theorem]{Algorithm}
\newtheorem{corollary}[theorem]{Corollary}
\newtheorem{observation}[theorem]{Observation}
\newtheorem{problem}[theorem]{Open Problem}
\newcommand{\noin}{\noindent}
\newcommand{\ind}{\indent}
\newcommand{\al}{\alpha}
\newcommand{\om}{\omega}
\newcommand{\pp}{\mathcal P}
\newcommand{\ppp}{\mathfrak P}
\newcommand{\R}{{\mathbb R}}
\newcommand{\N}{{\mathbb N}}
\newcommand\eps{\varepsilon}
\newcommand{\E}{\mathbb E}
\newcommand{\Prob}{\mathbb{P}}
\newcommand{\pl}{\textrm{C}}
\newcommand{\dang}{\textrm{dang}}
\renewcommand{\labelenumi}{(\roman{enumi})}
\newcommand{\bc}{\bar c}
\newcommand{\G}{{\mathcal{G}}}
\newcommand{\expect}[1]{\E \left [ #1 \right ]}
\newcommand{\ceil}[1]{\left \lceil #1 \right \rceil}
\newcommand{\of}[1]{\left( #1 \right)}
\newcommand{\set}[1]{\left\{ #1 \right\}}
\newcommand{\size}[1]{\left \vert #1 \right \vert}
\newcommand{\floor}[1]{\left \lfloor #1 \right \rfloor}

\title{Lazy Cops and Robbers played on Graphs}

\author{Deepak Bal}
\address{Department of Mathematics, Ryerson University, Toronto, ON, Canada, M5B 2K3}
\email{deepak.c.bal@ryerson.ca}

\author{Anthony Bonato}
\address{Department of Mathematics, Ryerson University, Toronto, ON, Canada, M5B 2K3}
\email{\tt abonato@ryerson.ca}

\author{William B.\ Kinnersley}
\address{Department of Mathematics, Ryerson University, Toronto, ON, Canada, M5B 2K3}
\email{\tt wkinners@ryerson.ca}

\author{Pawe{\l} Pra{\l}at}
\address{Department of Mathematics, Ryerson University, Toronto, ON, Canada, M5B 2K3}
\email{\tt pralat@ryerson.ca}

\thanks{Supported by grants from NSERC and Ryerson}
\keywords{Cops and Robbers, vertex-pursuit games, random graphs, domination, adjacency property, planar graphs, hypercubes}
\subjclass{05C57, 05C80}

\maketitle

\begin{abstract}
We consider a variant of the game of Cops and Robbers, called Lazy Cops and Robbers, where at most one cop can move in any round.  We investigate the analogue of the cop number for this game, which we call the lazy cop number.  Lazy Cops and Robbers was recently introduced by Offner and Ojakian, who provided asymptotic upper and lower bounds on the lazy cop number of the hypercube.  By investigating expansion properties, we provide asymptotically almost sure bounds on the lazy cop number of binomial random graphs $\G(n,p)$ for a wide range of $p=p(n)$. By coupling the probabilistic method with a potential function argument, we also improve on the existing lower bounds for the lazy cop number of hypercubes. Finally, we provide an upper bound for the lazy cop number of graphs with genus $g$ by using the Gilbert-Hutchinson-Tarjan separator theorem.
\end{abstract}

\section{Introduction}

The game of Cops and Robbers (defined, along with all the standard notation, at the end of this section) is usually studied in the context of the {\em cop number}, the minimum number of cops needed to ensure a winning strategy. The cop number is often challenging to analyze; establishing upper bounds for this parameter is the focus of Meyniel's conjecture that the cop number of a connected $n$-vertex graph is $O(\sqrt{n}).$ For additional background on Cops and Robbers and Meyniel's conjecture, see the book~\cite{bonato} and the surveys~\cite{bonato1,bonato2,bonato3}.

A number of variants of Cops and Robbers have been studied. For example, we may allow a cop to capture the robber from a distance $k$, where $k$ is a non-negative integer~\cite{bonato5,bonato4}, play on edges~\cite{pawel}, allow one or both players to move with different speeds~\cite{NogaAbbas, fkl} or to teleport, allow the robber to capture the cops~\cite{bonato0}, or make the robber invisible or drunk~\cite{drunk1,drunk2}. See Chapter~8 of~\cite{bonato} for a non-comprehensive survey of variants of Cops and Robbers.

We are interested in slowing the cops down to create a situation akin to chess, where at most one chess piece can move in a round. Hence, our focus in the present article is a recent variant introduced by Offner and Ojakian~\cite{oo}, where at most one cop can move in any given round. We refer to this variant, whose formal definition appears in Section \ref{sec:defns}, as {\em Lazy Cops and Robbers}; the analogue of the cop number is called the {\em lazy cop number}, and is written $c_L(G)$ for a graph $G.$  In~\cite{oo} it was proved for the hypercube $Q_n$ that $2^{\floor{\sqrt{n}/20}} \le c_L(Q_n) \le O(2^n \log n/n^{3/2})$.  We mention in passing that \cite{oo} introduced a number of variants of Cops and Robbers, in which some fixed number of cops (perhaps more than one) can move in a given round. We focus here on the extreme case in which only one cop moves in each round, but it seems likely that our techniques generalize to other variants. 

We present three results on Lazy Cops and Robbers and the lazy cop number. In Theorem~\ref{thm:main_gnp} we provide asymptotically almost sure bounds on the lazy cop number for binomial random graphs $\G(n,p)$ for a wide range of $p=p(n)$. We do this by examining typical expansion properties of such graphs. In Theorem~\ref{thm:hyp-lower}, by using the probabilistic method coupled with a potential function argument, we improve on the lower bound for the lazy cop number of hypercubes given in \cite{oo}. In Theorem~\ref{thm:bded_genus}, we provide an upper bound for graphs of genus $g$ using the Gilbert-Hutchinson-Tarjan separator theorem~\cite{ght}.

\subsection{Definitions and notation}\label{sec:defns}
We consider only finite, undirected graphs in this paper. For background on graph theory, the reader is directed to~\cite{west}.

The game of \emph{Cops and Robbers} was independently introduced in~\cite{nw,q} and the cop number was introduced in~\cite{af}. The game is played on a reflexive
graph; that is, each vertex has at least one loop. Multiple edges are
allowed, but make no difference to the play of the game, so we always assume there
is exactly one edge joining adjacent vertices. There are two players,
consisting of a set of \emph{cops} and a single \emph{robber}. The game is
played over a sequence of discrete time-steps or \emph{turns},
with the cops going first on turn $0$ and then playing on alternate time-steps.  A \emph{round} of the game is a cop move together with the subsequent robber move.  The cops and robber occupy vertices;
for simplicity, we often identify the player with the vertex they occupy. We
refer to the set of cops as $C$ and the robber as $R.$ When a player is ready to move in a round they must
move to a neighbouring vertex. Because of the loops, players can \emph{pass}, or remain on their own vertices. Observe that any subset of $C$ may
move in a given round. The cops win if after some finite number of rounds, one of them can occupy
the same vertex as the robber (in a reflexive graph, this is equivalent to the cop landing on the robber).
This is called a \emph{capture}. The robber
wins if he can
evade capture indefinitely. A \emph{winning strategy for the cops} is a set
of rules that if followed, result in a win for the cops. A \emph{winning
strategy for the robber} is defined analogously.  As stated earlier, the game of \emph{Lazy Cops and Robbers} is defined almost exactly as Cops and Robbers, with the exception
that exactly one cop moves in any round.

If we place a cop at each vertex, then the cops are guaranteed to win.
Therefore, the minimum number of cops required to win in a graph $G$ is a
well-defined positive integer, named the \emph{lazy cop number} of the graph $G.$ We write $c_L(G)$ for the lazy cop number of a graph $G$.

\section{Random graphs}

In this section, we consider the game played on binomial random graphs. The \emph{random graph} $\G(n,p)$ consists of the probability space $(\Omega, \mathcal{F}, \Prob)$, where $\Omega$ is the set of all graphs with vertex set $\{1,2,\dots,n\}$, $\mathcal{F}$ is the family of all subsets of $\Omega$, and for every $G \in \Omega$,
$$
\Prob(G) = p^{|E(G)|} (1-p)^{{n \choose 2} - |E(G)|} \,.
$$
This space may be viewed as the set of outcomes of ${n \choose 2}$ independent coin flips, one for each pair $(u,v)$ of vertices, where the probability of success (that is, adding edge $uv$) is $p.$ Note that $p=p(n)$ may tend to zero as $n$ tends to infinity. All asymptotics throughout are as $n \rightarrow \infty $ (we emphasize that the notations $o(\cdot)$ and $O(\cdot)$ refer to functions of $n$, not necessarily positive, whose growth is bounded). We say that an event in a probability space holds \emph{asymptotically almost surely} (or \emph{a.a.s.}) if the probability that it holds tends to $1$ as $n$ goes to infinity.

\bigskip

Let us first briefly describe some known results on the (non-lazy) cop number of $\G(n,p)$. Bonato, Wang, and Pra\l{}at investigated such games in $\G(n,p)$ random graphs and in generalizations used to model complex networks with power-law degree distributions (see~\cite{bpw}). From their results it follows that if $2 \log n / \sqrt{n} \le p < 1-\eps$ for some $\eps>0$, then a.a.s. we have that
\begin{equation}\label{eq:classig_c}
c(\G(n,p))= \Theta(\log n/p),
\end{equation}
so Meyniel's conjecture holds a.a.s.\ for such $p$. In fact, for $p=n^{-o(1)}$ we have that a.a.s.\  $c(\G(n,p))=(1+o(1)) \log_{1/(1-p)} n$. A simple argument using dominating sets shows that Meyniel's conjecture also holds a.a.s.\  if $p$ tends to 1 as $n$ goes to infinity (see~\cite{p} for this and stronger results). Bollob\'as, Kun and Leader~\cite{bkl} showed that if $p(n) \ge 2.1 \log n /n$, then a.a.s.
$$
\frac{1}{(pn)^2}n^{ 1/2 - 9/(2\log\log (pn))  }  \le c(\G(n,p))\le 160000\sqrt n \log n\,.
$$
From these results, if $np \ge 2.1 \log n$ and either $np=n^{o(1)}$ or $np=n^{1/2+o(1)}$, then a.a.s.\ $c(\G(n,p))= n^{1/2+o(1)}$. Somewhat surprisingly, between these values it was shown by \L{}uczak and Pra\l{}at~\cite{lp2} that the cop number has more complicated behaviour. It follows that a.a.s.\ $\log_n  c(\G(n,n^{x-1}))$ is asymptotic to the function (denoted in blue) shown in Figure~\ref{fig1}.
\begin{figure}[h]
\begin{center}
\includegraphics[width=3.6in]{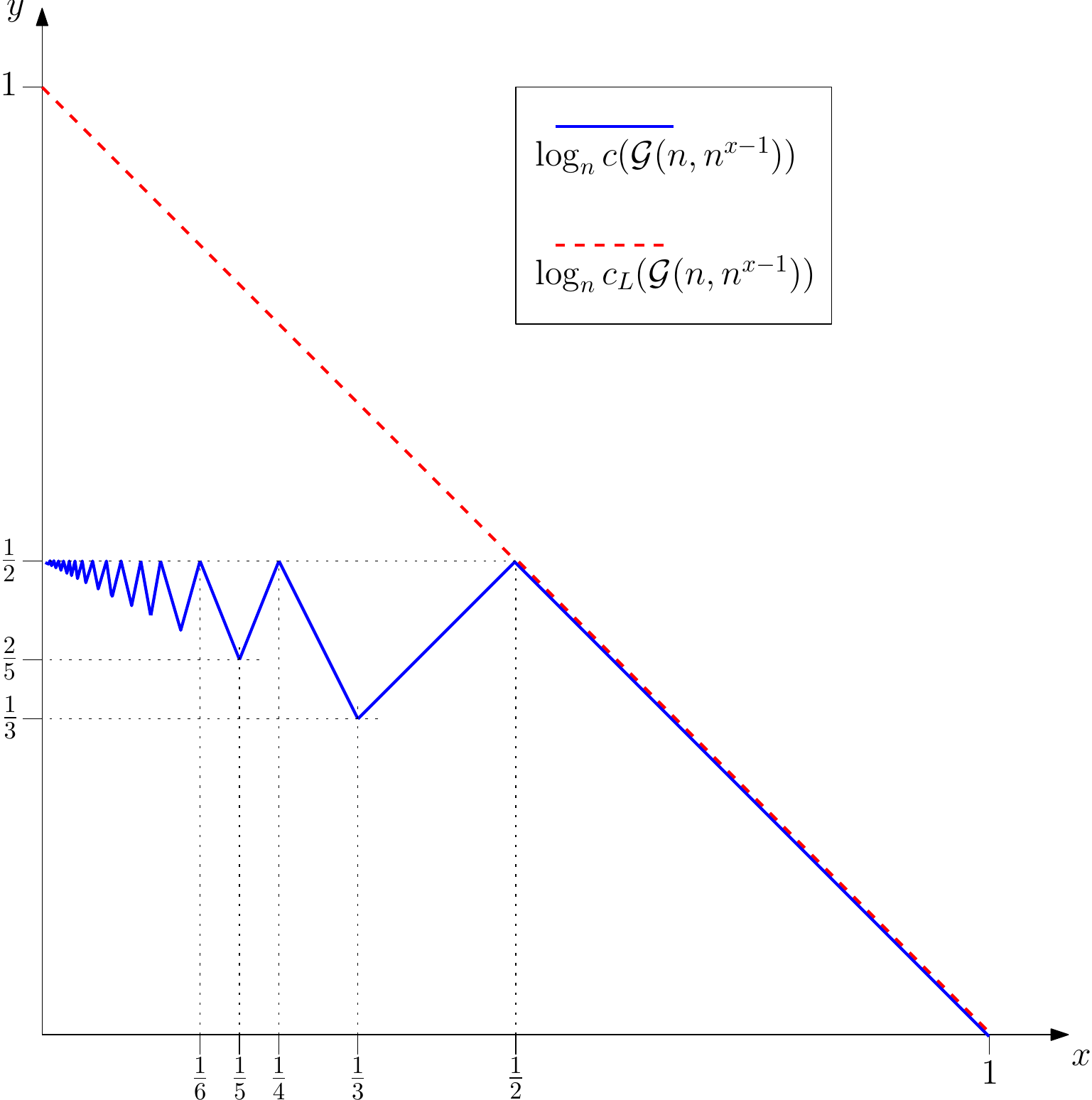}
\end{center}
\caption{The ``zigzag'' function representing the ordinary cop number along with the function representing the lazy cop number.}\label{fig1}
\end{figure}

The above results show that Meyniel's conjecture holds a.a.s.\ for random graphs except perhaps when $np=n^{1/(2k)+o(1)}$ for some $k \in \N$, or when $np=n^{o(1)}$. Pra\l{}at and Wormald showed recently that the conjecture holds a.a.s.\ in $\G(n,p)$~\cite{PW_gnp} as well as in random $d$-regular graphs~\cite{PW_gnd}.

\bigskip

In this paper, we investigate the lazy cop number of $\G(n,p)$. The main theorem of this section is the following.

\begin{theorem}\label{thm:main_gnp}
Let $0<\alpha \le1$, let $\eps>0$, and let $d=d(n)=(n-1)p=n^{\alpha+o(1)}$.
\begin{enumerate}
\item If $\alpha = 1$ and $p < 1-\eps$, then a.a.s.\
$$
c_L(\G(n,p))= (1+o(1)) \log_{1/(1-p)} n\,.
$$
(Note that if $p=o(1)$, then $\log_{1/(1-p)} n = (1+o(1)) \frac {\log n}{p}$.)
\item If $\frac{1}{2}<\alpha<1$, then a.a.s.\
$$
c_L(\G(n,p))= \Theta \left( \frac {\log n}{p} \right)\,.
$$
\item If $\frac{1}{j+1}<\alpha<\frac{1}{j}$ for some integer $j \ge 2$, then a.a.s.\
$$
 \frac{1}{p} =O\big( c_L(\G(n,p))\big)= O \left( \frac{\log n}{p} \right)\,.
$$
\item If $\alpha=\frac{1}{j}$ for some integer $j \ge 2$, then a.a.s.\
$$
\frac{1}{p \log^2 n} =O\big( c_L(\G(n,p))\big)= O \left( \frac{\log n}{p} \right)\,.
$$
\end{enumerate}
In particular, a.a.s.\ $c_L(\G(n,p))=n^{1-\alpha+o(1)}$.
\end{theorem}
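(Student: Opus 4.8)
The plan is to show, for a.a.s.\ $\G(n,p)$, the two bounds (i) $c_L(\G(n,p))=O((\log n)/p)$ and (ii) $c_L(\G(n,p))\ge n^{1-\alpha-o(1)}$; since $d=(n-1)p=n^{\alpha+o(1)}$ forces both $1/p$ and $(\log n)/p$ to equal $n^{1-\alpha+o(1)}$, the ``in particular'' statement then follows, and the four displayed estimates are sharper forms of (i) and (ii). For (i) I would use that $c_L(G)\le\gamma(G)$ for every graph $G$: if the cops occupy a dominating set, then in every round the robber stands adjacent to some cop, and moving that \emph{one} cop captures him, so laziness costs nothing. A first-moment computation shows that a.a.s.\ the fixed set $\{1,2,\dots,m\}$ with $m=\ceil{(1+\eps)\log_{1/(1-p)}n}$ dominates $\G(n,p)$, since the expected number of undominated vertices is at most $n(1-p)^m\le n^{-\eps}=o(1)$; hence a.a.s.\ $\gamma(\G(n,p))\le(1+o(1))\log_{1/(1-p)}n=O((\log n)/p)$, which also supplies the matching constant needed in part (1).

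\emph{Lower bound for $\alpha>1/2$.} Given $k$ cops, I would have the robber keep the invariant ``at the start of each round I am at distance at least $2$ from every cop'', starting at any vertex outside the closed neighbourhood of the initial cop set. In a round only one cop moves, so afterwards at most one cop lies at distance $1$ from the robber and none is on him; the robber then needs a neighbour of his vertex $u$ outside $N[S]$, where $S$ is the current cop set, $|S|\le k$, $u\notin S$. Thus the robber survives forever as soon as $\G(n,p)$ has the property
\[
(\star)\qquad N(u)\setminus N[S]\neq\emptyset\qquad\text{for all }u\text{ and all }S\subseteq V\setminus\{u\}\text{ with }|S|\le k .
\]
For fixed $u,S$ the events ``$w\in N(u)$ and $w\notin N[S]$'' over $w\notin S\cup\{u\}$ are independent, each of probability $p(1-p)^{|S|}\ge p(1-p)^k$, so $\Prob[(\star)\text{ fails at }u,S]\le\exp\!\big(-(1-o(1))\,np(1-p)^k\big)$; a union bound over the at most $n^{k+1}$ pairs shows $(\star)$ holds a.a.s.\ once $np(1-p)^k/(k\log n)\to\infty$. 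Taking $k=c\log_{1/(1-p)}n$ turns this into $n^{2\alpha-1-c+o(1)}\gg\log^2 n$: for $1/2<\alpha<1$ any $c\in(0,2\alpha-1)$ works, giving $c_L(\G(n,p))\ge k$, which is $\Omega((\log n)/p)=n^{1-\alpha+o(1)}$; for $\alpha=1$ and $p<1-\eps$ any $c=1-\delta$ works, giving $c_L(\G(n,p))\ge(1-o(1))\log_{1/(1-p)}n$ and hence part (1).

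\emph{Lower bound for $\alpha\le1/2$ — the main obstacle.} Here the target is $k\approx n^{1-\alpha}/\mathrm{polylog}(n)$, which exceeds the typical degree $np=n^{\alpha}$, so the union bound over the $\binom nk=n^{\Theta(k)}$ cop sets above is hopelessly lossy and laziness must be exploited more carefully. The gain from $\alpha\le1/2$ is that $\G(n,p)$ becomes \emph{locally sparse}: a.a.s.\ any two vertices have at most $t_0$ common neighbours, where $t_0=O(1)$ when $\alpha$ is bounded away from $1/2$ and $t_0=O(\log n)$ in the critical case $\alpha=1/j$ (this is the source of the extra $\log^2 n$ in part (4)), and there are only $o(n)$ short cycles; meanwhile $\G(n,p)$ still expands, e.g.\ a.a.s.\ $|N(W)\setminus W|=\Omega(\min\{np\,|W|,\,n\})$ for every $W$ with $|W|\le n/2$. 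The robber would maintain that he occupies a large, internally well-expanding piece $Y_t$ of $G-N[C_t]$ (with $C_t$ the set of cop-occupied vertices) in which he has $\Theta(np)$ neighbours; because the cops are lazy $N[C_t]$ changes by only $O(np)$ vertices per round, and the co-degree bound forces the cop that moves to delete only $O(t_0)$ of the robber's current neighbours from $Y_t$, so the expansion in $Y_t$ lets him re-route into a slightly smaller safe piece as long as $k\,t_0=O(np)$, i.e.\ $k=O(1/p)$ for part (3) and $k=O(1/(p\log^2 n))$ for part (4). Making this rigorous — controlling how many cops lie within distances $2$ and $3$ of the robber, and checking that a suitable multi-radius potential cannot be pushed up by one lazy cop move as fast as the robber pushes it down by stepping into the expander — is the hard part of the whole theorem. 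In all cases the lower bound is $n^{1-\alpha-o(1)}$, which together with (i) gives a.a.s.\ $c_L(\G(n,p))=n^{1-\alpha+o(1)}$.
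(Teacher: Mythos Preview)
Your upper bound via $c_L\le\gamma$ is correct and matches the paper. Your lower bound for $\alpha>1/2$ via the property $(\star)$ is also correct; the paper instead simply invokes $c_L\ge c$ together with the known estimate $c(\G(n,p))=\Theta((\log n)/p)$, but your argument is essentially the standard proof of that classical cop-number bound, so the two routes coincide.

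The genuine gap is in your sketch for $\alpha\le 1/2$. Your displayed condition ``$k\,t_0=O(np)$'' with $t_0$ the co-degree does \emph{not} give $k=O(1/p)$: it gives only $k=O(np)=O(n^{\alpha})$, which for $\alpha<1/2$ is strictly smaller than the target $n^{1-\alpha}$. A ``distance $\ge 2$'' invariant alone cannot withstand more than $\Theta(np)$ cops, because if all $k$ cops sit at distance exactly $2$ from the robber's vertex $u$, each blocks up to $t_0$ of his $\approx np$ neighbours, and once $k\,t_0\ge np$ his neighbourhood is exhausted. What the paper actually does --- and what is missing from your outline --- is a $j$-level invariant where $j=\max\{i\in\N:i\alpha<1\}$: the robber insists that the number of cops at distance $i$ be at most roughly $(d/C)^{i-1}$ for every $i=2,\ldots,j$, with none at distance $\le 1$. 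Maintaining this uses not the co-degree but the bound $P_i(v,w)=O(1)$ on the number of $v,w$-paths of each length $i\le j$, so that a cop at distance $i+1$ from $u$ lies at distance $i$ from only $O(1)$ neighbours of $u$; the binding constraint enters at the deepest level, where all $K$ cops may sit at distance $j+1$, each joined to $u$ by $O(d^{j+1}/n)$ paths of length $j+1$, and the inequality $K\cdot d^{j+1}/n\lesssim (d/C)^{j}$ is precisely what forces $K=O(n/d)=O(1/p)$. The extra $\log$ losses in part~(iv) come from the fact that when $\alpha=1/(j+1)$ the path count $P_{j+1}(v,w)$ is $O(\log n)$ rather than $O(1)$. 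Your closing remark about ``distances $2$ and $3$'' and a ``multi-radius potential'' gestures toward this, but the explicit heuristic you wrote down ($k\,t_0=O(np)\Rightarrow k=O(1/p)$) is an arithmetical slip that conceals the real work.
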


See Figure~\ref{fig1} for corresponding function (denoted in dotted red) for the lazy cop number. In fact, for case (iv) we prove a slightly stronger lower bound---see Theorem~\ref{thm:gnp_lower} for more details.

\subsection{Upper bound}

First, let us note that $c_L(G) \le \gamma(G)$ for all graphs $G$, since by initially occupying a dominating set of $G$, the cops win on their first turn. Moreover, it is well-known (and straightforward to show using, for example, the probabilistic method) that for any graph $G$
$$
\gamma(G) \le n \frac {\log(\delta+1)}{\delta+1},
$$
where $\delta=\delta(G)$ is the minimum degree of $G$. For $G \in \G(n,p)$, when $pn \gg \log n$ we have that a.a.s.\ $\delta(G)=(1+o(1)) pn$.  Consequently, a.a.s.
$$
c_L(G) \le (1+o(1)) \frac {\log (pn)}{p},
$$
provided that $pn \gg \log n$; this provides the upper bound for cases (ii)-(iv) in Theorem~\ref{thm:main_gnp}. When $p=\Omega(1)$ but $p<1-\eps$ for some $\eps>0$ (see case (i) of Theorem~\ref{thm:main_gnp}), one can easily show that a.a.s.\ 
$$
\gamma(G) \le \log_{1/(1-p)} n + \log_{1/(1-p)} \omega = (1+o(1)) \log_{1/(1-p)} n,
$$  
where $\omega = \omega(n)=n^{o(1)}$ is any function tending to infinity sufficiently slowly as $n \to \infty$. Indeed, any set of vertices with cardinality $\log_{1/(1-p)} n + \log_{1/(1-p)} \omega = (1+o(1)) \log_{1/(1-p)} n$ is  a.a.s. a dominating set.

\subsection{Lower bound}

For dense graphs (cases (i)-(ii) in Theorem~\ref{thm:main_gnp}) it is enough to use results for the classic cop number (see~(\ref{eq:classig_c}) and subsequent discussion) and the trivial observation that $c_L(G) \ge c(G)$.

For sparse graphs (cases (iii)-(iv) in Theorem~\ref{thm:main_gnp}), let us start by proving some typical properties of $\G(n,p)$. These observations are part of folklore, but here we provide all proofs for completeness.

Let $N_i[v]$ denote the set of vertices within distance $i$ of $v$. For simplicity, we use $N[v]$ to denote $N_1[v]$. Moreover, let $N[S] = \bigcup_{v \in S} N[v]$. Finally, let $P_i(v,w)$ denote the number of paths of length $i$ joining $v$ and $w$.

\begin{lemma}\label{l:elem}
Let $\eps$ and $\alpha$ be constants such that $0<\eps<0.1$, $\eps<\alpha<1-\eps$, and let $d=d(n)=p(n-1)=n^{\alpha+o(1)}$. Then a.a.s.\ for every vertex $v$ of $G=(V,E) \in \G(n,p)$ the following properties hold.
\begin{enumerate}
\item For every $i$ such that $d^i = o(n)$, we have
$$
|N_i[v]| = (1+o(1)) d^i.
$$
Furthermore, for $d^i = cn$ with $c = c(n) \le 1$ and $c = \Omega(1)$,
$$
|N_i[v]| = (1-e^{-c}+o(1)) d^i.
$$

\item Let $\ell \in \N$ be the largest integer such that $\ell < 1/\alpha$.  Then the following hold:
\begin{enumerate}
\item If $w\in N_i[v]$ for some $i$ with $2\le i \le \ell$, then $P_i(v,w) \le 3/(1-i\alpha)$.
\item If $w\in N_{\ell+1}[v]$ and $d^{\ell+1} \ge 7 n \log n$, then $P_{\ell+1}(v,w) \le \frac{6}{1-\ell \alpha}\frac{d^{\ell+1}}{n}$.
\item If $w\in N_{\ell+1}[v]$ and $d^{\ell+1} < 7 n \log n$, then $P_{\ell+1}(v,w) \le \frac{42}{1-\ell \alpha} \log n$.
\item If $w\in N_{\ell+2}[v]$ and $d^{\ell+1} < n$, then $P_{\ell+2}(v,w) \le \frac{84}{1-\ell \alpha} \frac {d^{\ell+2}\log n}{n}$.
\end{enumerate}
\item If $i$ satisfies $d^i < n/\log n$, then every edge of $G$ is contained in at most $\eps d$ cycles of length at most $i+2$.
\end{enumerate}
\end{lemma}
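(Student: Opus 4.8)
We prove the three statements by first/second--moment arguments, largely independently of one another.

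\emph{Part (i).} The plan is a breadth--first search from $v$. Having exposed $N_{i-1}[v]$ together with its frontier $F_{i-1}=N_{i-1}[v]\setminus N_{i-2}[v]$, the edges between $F_{i-1}$ and $V\setminus N_{i-1}[v]$ are still unexposed, so each vertex outside $N_{i-1}[v]$ joins $N_i[v]$ independently with probability $1-(1-p)^{|F_{i-1}|}$; hence $|F_i|$ is binomial with mean $(n-|N_{i-1}[v]|)\bigl(1-(1-p)^{|F_{i-1}|}\bigr)$. Since $|F_{i-1}|\ge|F_1|=\deg(v)=(1+o(1))d$ with $d=n^{\alpha+o(1)}=\omega(\log n)$, a Chernoff bound gives multiplicative concentration of $|F_i|$ about its mean with failure probability $\exp(-n^{\Omega(1)})$, small enough to union--bound over the $n$ choices of $v$ and the $O(1/\alpha)=O(1)$ relevant layers. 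When $d^i=o(n)$ one has $p|F_{i-1}|=(1+o(1))d^i/n=o(1)$, so $1-(1-p)^{|F_{i-1}|}=(1+o(1))p|F_{i-1}|$, the recursion gives $|F_i|=(1+o(1))d\,|F_{i-1}|$, and $|N_i[v]|=\sum_{j\le i}|F_j|=(1+o(1))d^i$; when $d^i=cn$ with $c=\Omega(1)$ bounded one instead has $p|F_{i-1}|=(1+o(1))c$, so $1-(1-p)^{|F_{i-1}|}=1-e^{-c}+o(1)$, which yields the stated estimate. The $(1+o(1))$ errors accumulate over only $O(1)$ layers, so they remain $1+o(1)$.

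\emph{Part (ii).} For each relevant path--length $m\in\{2,\dots,\ell+2\}$ and each pair $(v,w)$ I would prove $\Prob\bigl[P_m(v,w)\ge T_m\bigr]=o(n^{-2})$ for the claimed threshold $T_m$, then union--bound over the $\binom n2$ pairs. The tool is a $k$--th binomial moment: $\Prob[P_m(v,w)\ge T_m]\le\E\binom{P_m(v,w)}{k}\big/\binom{\lceil T_m\rceil}{k}$, where $\E\binom{P_m(v,w)}{k}$ expands as a sum, over the graphs $H$ that can arise as a union of $k$ distinct length--$m$ $v$--$w$ paths, of $(\text{number of embeddings of }H\text{ with }v,w\text{ fixed})\times p^{e(H)}$. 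The dominant term is the ``generic'' shape of $k$ internally disjoint paths, contributing $(1+o(1))\mu^k/k!$ with $\mu:=\E P_m(v,w)=(1+o(1))d^m/n$; the remaining shapes contribute $o(\mu^k/k!)$ by a defect analysis (each identification of vertices in the generic configuration costs a factor $n^{-\Omega(1)}$, which beats the $\mathrm{poly}(k)$ ways such identifications can occur, as long as $k=O(\log n)$). In case (a), where $2\le m\le\ell$ and hence $m\alpha<1$ and $\mu=n^{m\alpha-1+o(1)}=o(1)$, taking the constant $k=T_m=\lceil 3/(1-m\alpha)\rceil$ makes $\binom n2\mu^k/k!=n^{2+k(m\alpha-1)+o(1)}/k!=o(1)$ — this is exactly why ``$3$'' rather than ``$2$'' is needed. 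In cases (b)--(d), where $m\in\{\ell+1,\ell+2\}$ and $(\ell+1)\alpha\ge1$, $(\ell+2)\alpha>1$, the mean $\mu$ is no longer $o(1)$; here I would take $k=\Theta(\log n)$ (so the defect analysis still applies) and use $\Prob[P_m(v,w)\ge T_m]\le(1+o(1))(e\mu/T_m)^k$, with $T_m$ a suitable constant times $\mu$ or times $\log n$ (the constants $6,42,84$), so that $e\mu/T_m\le e/6<1$ and the bound collapses to $o(n^{-2})$; the case split — whether $d^{\ell+1}$ exceeds $7n\log n$, and the separate length--$(\ell+2)$ estimate — just records whether $\mu$ or $\log n$ is the larger of the two.

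\emph{Part (iii) and the main obstacle.} Part (iii) follows from part (ii): a cycle of length at most $i+2$ through an edge $uv$ is a $u$--$w$ path of length $m\in\{2,\dots,i+1\}$ closed by the edge $uv$, so the number of such cycles is at most $\sum_{m=2}^{i+1}P_m(u,v)$; the hypothesis $d^i<n/\log n$ keeps $m$ within the reach of part (ii) (in particular $i\le\ell+1$, since $d^{\ell+2}=n^{(\ell+2)\alpha+o(1)}=\omega(n)$), and plugging in the part~(ii) bounds together with $d=n^{\alpha+o(1)}=\omega(\log n)$ makes the sum at most $\eps d$ for $n$ large. The hard part is part (ii): one must show, uniformly in $n$, that every graph $H$ occurring as a union of $k$ distinct length--$m$ $v$--$w$ paths (with $k$ as above) is dense enough — essentially $v(H)<(1-\alpha)e(H)$, with a fixed margin — that its total contribution to the moment, summed over the $\binom n2$ root pairs, is $o(1)$, and then to track the $(1+o(1))$ and $\mathrm{poly}(k)$ factors carefully enough that the constants $3,6,42,84$ genuinely deliver an $o(n^{-2})$ tail in every regime, in particular near the critical values $\alpha=1/j$. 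Everything else — the breadth--first search of part (i), the reduction of (iii) to (ii), and the bookkeeping for the short path--lengths — is routine.
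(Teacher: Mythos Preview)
Your treatment of part~(i) matches the paper's: a breadth--first exposure with Chernoff at each of the $O(1)$ layers.

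For part~(ii) your route diverges from the paper's, and for part~(iii) there is a genuine gap.

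\medskip

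\textbf{Part (ii).} The paper does \emph{not} attack (b)--(d) by a $k$th--moment computation with $k=\Theta(\log n)$. It proves only~(a) by the first moment (constant $k=\lceil 3/(1-i\alpha)\rceil$, finitely many shapes to check), and then obtains (b)--(d) by a one--line reduction: expose $N_\ell[v]$ (respectively $N_{\ell+1}[v]$), note that any $v,w$--path of length $\ell+1$ decomposes as an edge from $w$ into $N_\ell[v]$ followed by a length--$\ell$ path already controlled by~(a), and bound the number of such edges by a straight Chernoff bound on $\mathrm{Bin}(|N_\ell[v]|,p)$. This entirely avoids the defect analysis for unions of $\Theta(\log n)$ long paths, which you have only sketched and which is genuinely delicate (the number of non--generic shapes is no longer bounded, so you must argue that the factor $n^{-\Omega(1)}$ per identification beats the combinatorial blow--up uniformly). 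Your uniform moment approach can be pushed through, but it is substantially harder than the paper's, and the paper's decomposition is exactly what makes the constants $6,42,84$ fall out: each is the bound $3/(1-\ell\alpha)$ from~(a) multiplied by a Chernoff constant ($2$ or $14$), with a further factor of $2$ in case~(d).

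\medskip

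\textbf{Part (iii).} Your reduction to~(ii) fails at the boundary. When $\alpha=1/(\ell+1)$ and $d^{\ell+1}$ sits just below $n/\log n$, you may have $i=\ell+1$, and then you must control $P_{\ell+2}(u,v)$. But the bound~(ii)(d) you are quoting gives only
\[
P_{\ell+2}(u,v)\ \le\ \frac{84}{1-\ell\alpha}\cdot\frac{d^{\ell+2}\log n}{n}
\ =\ \frac{84}{1-\ell\alpha}\cdot d\cdot\frac{d^{\ell+1}\log n}{n},
\]
and with $d^{\ell+1}\log n/n$ bounded away from~$0$ this is a fixed constant times $d$, not $\eps d$ (recall $\eps<0.1$ while $84/(1-\ell\alpha)=84(\ell+1)$). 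So your sum $\sum_{m\le i+1}P_m(u,v)$ is only $O(d)$, not $\le\eps d$. The paper does \emph{not} reduce~(iii) to~(ii); it proves~(iii) independently, observing that $\E[P_j(r_1,r_2)]\le d^j/n\le d/\log n\le \eps d/(4i)$ for every $j\le i+1$, and then invoking Vu's large--deviation inequality for subgraph counts to get $\Prob[P_j>\eps d/(2i)]=o(n^{-2})$. Your own moment method could in principle be aimed directly at the threshold $\eps d/(2i)$ (rather than at the constants of~(ii)) to achieve the same thing, but that is not what you wrote.
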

\begin{proof}
Let $S \subseteq V$, let $s=|S|$, and consider the random variable \[X = X(S) = |\left\{ v\in V\setminus S\,:\, uv \in E \text{ for some } u\in S \right\}|,\] that is, the number of vertices outside of $S$ and adjacent to at least one vertex in $S$.  For (i), we bound $X$ in a stochastic sense. There are two things that need to be estimated: the expected value of $X$ and the concentration of $X$ around its expectation.

It is evident that 
\begin{eqnarray*}
\E [X] &=& \left( 1 - \left(1- \frac {d}{n-1} \right)^s \right) (n-s) \\
&=& \left( 1 - \exp \left( - \frac {ds}{n} (1+O(d/n)) \right) \right) n (1+O(s/n)) \\
&=& ds (1+O(ds/n)) \\
&=& ds (1+O(\log^{-1} n)),
\end{eqnarray*}
provided $ds \le n/ \log n $. We next use a consequence of Chernoff's bound (see e.g.~\cite[p.\ 27\ Cor.\ 2.3]{JLR}), that
\begin{equation}\label{chern}
\Prob( |X-\E [X]| \ge \eps \E [X]) \le 2\exp \left( - \frac {\eps^2 \E [X]}{3} \right)
\end{equation}
for  $0 < \eps < 3/2$. This implies that the expected number of sets $S$ such that $\big| X(S) - d|S| \big| > \eps d|S|$ and $|S| \le n/(d\log n)$  is, for $\eps = 2/{\log n}$, at most
\begin{equation}\label{badS}
\sum_{s=1}^{n/(d\log n)} \binom{n}{s} \cdot 2 \exp \left( - \frac {\eps^2 ds}{3+o(1)} \right)\le \sum_{s=1}^{n/(d\log n)} 2n^s \exp \left( - \frac {\eps^2 s \log^3 n}{3+o(1)} \right) = o(1),
\end{equation}
where the first inequality uses the fact that $d\ge\log^3 n$.

So a.a.s.\ if $ |S| \le n/d\log n$, then $X(S) =d |S| (1+O(1/\log n))$, where the bound in $O()$ is uniform.  Since $d \ge \log^3 n$, for such sets we have 
$$
|N[S]| = \size{S} + X(S)= X(S) (1+O(1/d)) = d |S| (1+O(1/\log n)).
$$
We may assume this equation holds deterministically.

This assumption yields good bounds on the ratios of $\size{N[v]}$ and $\size{N[N[v]]} = \size{N_2[v]}$, of $\size{N_2[v]}$ and $\size{N_3[v]}$, and so on. These bounds apply to the ratio $\size{N_r[v]} / \size{N_{r-1}[v]}$ so long as $d^r \le {n}/{\log n}$.  The cumulative multiplicative error across these ratios is $(1+O(\log^{-1}n))^r$, which is $(1+o(1))$ since $r$ can be at most $1/\alpha + o(1) = O(1)$.  Thus,
\begin{equation}\label{eq:i}
| N_r [v] | = (1+o(1)) d^{r}
\end{equation}
for all vertices $v$ and $r$ such that $d^r \le n / \log n$, which establishes (i) in this case.

Suppose now that $d^r = cn$ with $c = c(n) \le 1$ and $c = \Omega(1)$.  Let $U = N_{r-1}[v]$.  Using~(\ref{eq:i}), we have that $\size{U} = (1+o(1))d^{r-1}$, so applying the assumption that $d^r = cn$, we have
\begin{align*}
\expect{\size{N[U]}} &= \size{U} + \of{1 - (1-p)^{\size{U}}}(n-\size{U})\\
 &=|U| + \of{1 - \exp\of{-\frac{d\cdot d^{r-1}(1+o(1))}{n}\of{1 + O(d/n)}}}(n- |U|)\\
 &= n(1-e^{-c} + o(1)).
\end{align*}
Chernoff's bound~(\ref{chern}) can be used again, in the same way as before, to show that in this case a.a.s.\ $|N[U]|$ is concentrated near its expected value for all $v$ and $r$. Thus, (i) holds also in this case.

\bigskip

The first part of (ii) can be easily verified using the first moment method. Indeed, suppose there exists $w\in N_i[v]$, for some $v \in V$ and $i \in \N$ with $2\le i<1/\alpha$, such that $v$ and $w$ are joined by $k = \lceil 3/(1-i\alpha) \rceil$ internally disjoint paths of length $i$. This structure has $2+k(i-1)$ vertices and $ki$ edges. The expected number of such subgraphs in $G$ is
$$
O(n^{2+k(i-1)} p^{ki}) \le n^{2+k(i-1)+ki(\alpha-1)+o(1)} = n^{2-k(1-i\alpha)+o(1)} \le n^{-1+o(1)} = o(1).
$$
Hence, a.a.s.\ there is no such subgraph in $G$. Since all other possible structures joining $v$ and $w$ by $k = \lceil 3/(1-i\alpha) \rceil$ paths of length $i$ (not necessarily internally disjoint) are even denser, the same argument applies to them as well. Finally, since $\alpha$ is constant and so is $i \le 1/\alpha + o(1)$, there are only finitely many structures to consider. The claim follows by the union bound.  

The second part of (ii) is a consequence of (i), the first part of (ii), and Chernoff's bound. Suppose $d^{\ell+1} \ge 7n \log n$. Let us first expose the $\ell$th neighbourhood of $v$.  By (i), we may assume that $|N_\ell[v]|=(1+o(1)) d^{\ell}$. For any $w \in V \setminus N_\ell[v]$, the expected number of edges joining $w$ to $N_\ell[v]$ is $p |N_\ell[v]|= (1+o(1)) d^{\ell+1}/n \ge (7+o(1)) \log n$. It follows from~(\ref{chern}) that with probability $1-o(n^{-2})$ there are at most $2d^{\ell+1}/n$ edges joining $w$ to $N_\ell[v]$. By the first part of (ii), we may assume that every vertex is joined to $v$ by fewer than $3/(1-\ell \alpha)$ paths of length $\ell$. Hence, with probability $1-o(n^{-2})$, the desired bound on the number of $v,w$-paths of length $\ell+1$ holds for the pair $v,w$.  The desired result holds by applying the union bound over all pairs under consideration.

Suppose now that $d^{\ell+1} < 7n \log n$. This time, the expected number of edges joining $w$ and $N_\ell[v]$ is at most $(7+o(1)) \log n$, and we apply the more common form of Chernoff's bound: if $X$ is distributed as ${\rm Bin}(n,p)$, then
\begin{equation}\label{chern2}
\Prob (X \ge \E [X] + t) \le \exp \left( - \frac {t^2}{2(\E [X]+t/3)} \right).
\end{equation}
This shows that with probability $1-o(n^{-2})$, there are at most $14 \log n$ edges joining $w$ and $N_\ell[v]$. The rest of the argument works as before.

Finally, suppose $d^{\ell+1} < n$. By (i), we may assume that
$$
(1-e^{-1}+o(1)) d^{\ell+1} \le |N_{\ell+1}[v]| \le (1+o(1)) d^{\ell+1}.
$$
The expected number of edges joining some $w \in V \setminus N_{\ell+1}[v]$ to $N_{\ell+1}[v]$ is $p |N_{\ell+1}[v]| = \Theta (d^{\ell+2}/n) \ge (7+o(1)) \log n$  It follows from~(\ref{chern}) that with probability $1-o(n^{-2})$ there are at most $2d^{\ell+2}/n$ edges joining $w$ to $N_{\ell+1}[v]$, and the desired bound holds.

\bigskip

In order to verify (iii) it is enough to check that for any given pair of vertices $r_1$ and $r_2$, and any $j$ such that $2\le j\le i+1$, the probability that $G$ contains more than $\eps d /(2i)$ different $r_1,r_2$-paths of length $j$ is $o(n^{-2})$; the result then follows by applying the union bound over all adjacent pairs $(r_1,r_2)$. Denote the number of such paths by $X_j^{r_1,r_2}(n,p)$. For the expectation of $X_j^{r_1,r_2}(n,p)$, we have
\begin{eqnarray*}
\E [X_j^{r_1,r_2}(n,p)] &=&\binom {n-2}{j-1}(j-1)! p^{j} \\
&<& n^{j-1}p^j \le \frac{d^{j}}{n} \\
&\le & d \frac{d^i}{n}\le d\frac{1}{\log n} \\&\le &\frac{\eps d}{4i}\,.
\end{eqnarray*}

Now, choose $p'>p$ in such a way that
$$
\E [X_j^{r_1,r_2}(n,p')]=\frac{\eps d}{4i}\,
$$
and note that this is at least $\log^{2(j-1)}n$.
Then by a result of Vu (see~\cite{Vu}, Corollary 2.6) it follows that for some constant $a>0$,
\begin{eqnarray*}
\Pr(X_j^{r_1,r_2}(n,p')>{\eps d}/({2i})) &\le& \Pr(X_j^{r_1,r_2}(n,p')>2\E  [X_j^{r_1,r_2}(n,p')]) \\
&\le& \exp\big(-a (\E [X_j^{r_1,r_2}(n,p')])^{1/(j-1)}\big) \\
&\le& \exp\big(-a \log^2 n)\big)=o(n^{-2})\,.
\end{eqnarray*}
Hence,
$$
\Pr\Big(X_j^{r_1,r_2}(n,p)>\frac{\eps d}{2i}\Big)\le \Pr\Big(X_j^{r_1,r_2}(n,p')>\frac{\eps d}{2i}\Big)=o(n^{-2})\,,
$$
and the assertion follows.
\end{proof}

Now, we are ready to prove our lower bound on $c_L(G)$ for $G \in \G(n,p)$. The proof is an adaptation of the proof used for the classic cop number in~\cite{lp2}. Let us point out that in this paper we also deal with the case $\alpha=1/(j+1)$, which was omitted in~\cite{lp2}.

\begin{theorem}\label{thm:gnp_lower}
Let $\frac{1}{j+1} < \alpha<\frac{1}{j}$ for some $j \in \N$, $c=c(j,\alpha)=\frac{6}{1-j \alpha}$, and $d=d(n)=p(n-1)=n^{\alpha+o(1)}$. Then a.a.s.\ for $G = (V,E) \in \G(n,p)$ we have that
\begin{equation}\label{eq:lower_gnp1}
c_L(G) \ge \frac {1-j\alpha}{12 (2c)^{j-1} j^j} \cdot \frac {1}{p} = \Omega \left( \frac 1p \right).
\end{equation}

Let $\alpha = \frac {1}{j+1}$ for some $j \in \N$. Then a.a.s.\ for $G = (V,E) \in \G(n,p)$ we have that
\begin{equation}\label{eq:lower_gnp2}
c_L(G) \ge
\begin{cases}
\frac {1-j\alpha}{12 (2c)^{j-1} j^j} \cdot \frac {1}{p} = \Omega \left( \frac 1p \right), & \text{ if } d^{j+1} \ge 7 n \log n\\
\frac{c(1-j\alpha)}{42 (2cj)^j} \cdot \frac{d^j}{\log n} = \Omega \left( \left( \frac {d^{j+1}}{n \log n} \right) \frac 1p \right) = \Omega \left( \frac {1}{p \log^2 n} \right), & \text{ if } n/\log n \le d^{j+1} < 7 n \log n\\
\frac{c^2(1-j\alpha)^2}{3528 (2c(j+1))^{j+1}} \cdot \frac{n}{d \log^2 n} = \Omega \left( \frac {1}{p \log^2 n} \right), & \text{ if } d^{j+1} < n/\log n.
\end{cases}
\end{equation}
\end{theorem}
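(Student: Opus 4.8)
The plan is to adapt the robber strategy used for the ordinary cop number in~\cite{lp2}; the new ingredient, responsible for the improvement from $\Omega(c(G))$ to $\Omega(1/p)$, is that when at most one cop moves per round the robber can relocate much more freely, since during any short stretch of rounds only one cop makes progress while the others are, in effect, frozen obstacles. We argue contrapositively: assuming $k$ cops with $k$ below the stated bound, we describe a way for the robber to evade capture forever, contradicting $k=c_L(G)$. We work entirely with the a.a.s.\ structural conclusions of Lemma~\ref{l:elem}, which we may assume hold for $G$; note that here $\ell=j$, so we have $P_i(v,w)\le 3/(1-i\alpha)\le c/2$ for $2\le i\le j$, the bounds~(ii)(b)--(ii)(d) on the numbers of $(j{+}1)$- and $(j{+}2)$-paths, and the neighbourhood-growth estimates~(i).

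The robber maintains the invariant that after each of his moves he occupies a vertex $v$ with $\mathrm{dist}(v,c)\ge 2$ for every cop $c$; since only one cop moves per round, this guarantees he is never captured. The only time this invariant is threatened is when a cop steps onto a neighbour of $v$; the robber then \emph{relocates}, over a bounded number $r\le j+1$ of consecutive rounds, walking along a path $v=v_0,v_1,\dots,v_r$ to a new vertex $v_r$ that re-establishes the invariant. The point is that such a relocation can be carried out \emph{safely}, meaning that each $v_t$ can be chosen so that no cop reaches it on the intervening rounds; laziness enters precisely here, because over these $r$ rounds the cops collectively make only $r$ moves, so only the single ``chasing'' cop is in play and the escape corridor cannot be cut off by the others.

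The crux of the proof --- and the step I expect to require most of the work --- is the counting argument that produces the relocation path. Using the path bounds of Lemma~\ref{l:elem}(ii), one shows that for a candidate vertex $w$ on the path, a cop $c$ can ``block'' only few vertices of $N[w]$: at most $O\!\left(1/(1-j\alpha)\right)$ of them if $\mathrm{dist}(c,w)\le j$, at most $O\!\left(d^{j+1}/n\right)$ or $O(\log n)$ of them if $\mathrm{dist}(c,w)=j+1$ (according as $d^{j+1}\ge 7n\log n$ or not), and at most $O\!\left(d^{j+2}\log n/n\right)$ at distance $j+2$. Constructing the path greedily, one level at a time --- at each level steering into the large ``free'' set of vertices guaranteed by Lemma~\ref{l:elem}(i) while avoiding the blocked vertices of all $\le k$ cops --- and taking a union bound over the cops and the $O(1)$ levels, the construction succeeds as long as $k\le\frac{1-j\alpha}{12(2c)^{j-1}j^j}\cdot\frac1p$; the factor $(2c)^{j-1}j^j$ is exactly the product of the per-level losses, a factor $\approx 2c=12/(1-j\alpha)$ at each of the $j-1$ interior levels and a further factor $j$ from choosing the level. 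Two points need care: verifying that the invariant really is restored at the \emph{end} of the relocation, given that the cop configuration has drifted by up to $r$ steps (so one must show this drift cannot spoil a target chosen far from every cop), and controlling the accumulated $(1+o(1))$ factors across the $j$ levels so that the constant is as claimed. This establishes~(\ref{eq:lower_gnp1}).

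For~(\ref{eq:lower_gnp2}), the boundary case $\alpha=1/(j+1)$, the relocation must reach one neighbourhood deeper --- into $N_{j+1}[v]$ or $N_{j+2}[v]$ --- because $N_j[v]$, of size $n^{j\alpha+o(1)}$, may not leave enough room; which neighbourhood is needed, and hence which of the bounds~(ii)(b), (ii)(c), (ii)(d) controls the count, is dictated by the size of $d^{j+1}$ relative to $7n\log n$ and to $n/\log n$. This produces exactly the three sub-cases, with the extra $\log n$ and $\log^2 n$ factors inherited from the $\log n$ factors in Lemma~\ref{l:elem}(ii)(c) and~(ii)(d); the rest of the argument is unchanged.
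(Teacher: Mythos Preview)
Your plan has a genuine gap: the invariant ``$\mathrm{dist}(v,c)\ge 2$ for every cop $c$'' is too weak to support the relocation step when $j\ge 2$. Consider the cop strategy of placing all $K=\Theta(1/p)$ cops at distance exactly~$2$ from the robber (nothing in your invariant forbids this), and then moving one of them to a neighbour of $v$. You now need a first step $v_1\in N(v)$ that is not adjacent to any cop---otherwise that cop moves onto $v_1$ on the very next cop turn and the robber is caught before the relocation gets anywhere. But by Lemma~\ref{l:elem}(ii)(a) each cop at distance~$2$ is joined to $v$ by at most a constant number $c'$ of $2$-paths, hence is adjacent to at most $c'$ vertices of $N(v)$; with $K=\Theta(1/p)=\Theta(n/d)$ cops this blocks $\Theta(n/d)$ neighbours, which for $\alpha<1/2$ (i.e.\ $j\ge 2$) exceeds $|N(v)|\sim d$. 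So there need be no admissible $v_1$ at all, and the greedy path construction stalls at the first level. Relatedly, the sentence ``only the single `chasing' cop is in play'' is not right: over the $r$ rounds of your relocation the cops get $r$ moves, and the adversary may spend them on $r$ \emph{different} cops, each one stepping onto the robber's current intermediate vertex.

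The fix, and what the paper actually does, is to strengthen the invariant so that it controls the \emph{number} of cops at every distance $i\le j$, not just at distance~$1$. Concretely, the paper calls $v$ \emph{safe} if (relative to a designated ``previous'' neighbour $x$) there are no cops at distance $\le 1$ and at most $(d/(2cj))^{i-1}$ cops at distance $i$ for each $2\le i\le j$. With this stratified bound in place, the robber never needs a multi-round relocation: a single-step move to a safe neighbour is always available, because the double-counting of paths shows that the cops at each level $i$ make at most $d/(2j)$ neighbours ``$r$-dangerous'' for each $r\le j$, totalling at most $d/2$; the one cop who moves can spoil only $O(1)$ further neighbours, and Lemma~\ref{l:elem}(i),(iii) then leave a safe neighbour. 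The constants $(2c)^{j-1}j^{j}$ you were aiming to explain arise from this geometric profile $(d/(2cj))^{i-1}$, not from per-level losses along an escape path. In the boundary case $\alpha=1/(j+1)$ the same stratified invariant is used, extended by one more level with an extra $\log n$ factor in the final sub-case, which is where the three sub-bounds come from.
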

\begin{proof}
In all cases, we provide a winning strategy for the robber on $G$.  Since our aim is to prove that the bounds hold a.a.s., we may assume without loss of generality that $G$ satisfies the properties stated in Lemma~\ref{l:elem}.

Suppose first that $1/(j+1) < \alpha < 1/j$ and that the robber is chased by $K=\frac {1-j\alpha}{12 (2c)^{j-1} j^j} \cdot \frac {1}{p}$ cops. For vertices $x_1, x_2, \dots,x_s$, let $\pl^{x_1, x_2, \dots,x_s}_i(v)$ denote the number of cops in the $i$th neighbourhood of $v$ in the graph induced by $V \setminus \{x_1,x_2,\dots,x_s\}$; in particular, if $v\notin\{x_1,x_2,\dots,x_s\}$, then $\pl^{x_1,x_2,\dots,x_s}_0(v)=0$ if and only if $v$ is not occupied by a cop. Right before the cops make their move, we say that the vertex $v$ occupied by the robber is {\em safe} if for some neighbour $x$ of $v$ we have $\pl^x_0(v)=0$, $\pl^x_1(v)=0$, and
$$
\pl_{i}^x(v) \le \Big(\frac{d}{2cj} \Big)^{i-1}
$$
for $i=2,3,\dots,j$; such a vertex $x$ will be called a {\em deadly neighbour} of $v$.

Since a.a.s.\ $G$ is connected, without loss of generality we may assume that at the beginning of the game all cops begin at the same vertex, $z$. Subsequently, the robber may choose a vertex $v$ at distance $j+1$ from $z$ (see Lemma~\ref{l:elem}(i) with $i=j$); clearly $v$ is safe. Hence, in order to prove the theorem, it is enough to show that if the robber's current vertex is safe, then no matter how the cops move in the next round, the robber can always move to a safe vertex.

For $r\ge 0$, we say that a neighbour $y\neq x$ of $v$ is {\em $r$-dangerous} if
\begin{itemize}
\item [(i)] $\pl^{v,x}_r(y)>0$ (for $r=0,1$)\,, or
\item [(ii)] $\pl^{v,x}_r(y)> \Big(\frac{d}{2cj} \Big)^{r-1}$ (for $r= 2,3, \ldots, j$)\,,
\end{itemize}
where $x$ is a deadly neighbour of $v$. The idea here is that $x$ represents the vertex occupied by the robber on the previous turn.  Clearly, excluding $x$, no neighbour of $v$ is 0-dangerous (since $v$ is safe, $\pl^x_1(v)=0$). We now check that for every $r \in \{1, 2, \ldots, j\}$, the number $r$-dangerous neighbours of $v$ other than $x$, which we denote by $\dang(r)$, is smaller than $d/2j$. Every $1$-dangerous vertex has a cop as a neighbour. On the other hand, every cop is adjacent to at most $c$ neighbours of $v$, since otherwise we would have more than $c$ paths between this cop and $v$, contradicting Lemma ~\ref{l:elem}(ii). Moreover, by the assumption that $v$ is safe, we have $\pl_2^x(v)\le  \frac{d}{2cj}$. Combining all of these yields
$$
\dang(1) \le c \cdot \pl_{2}^x(v) \le c \Big(\frac{d}{2cj} \Big) = \frac {d}{2j}.
$$
For $r \in \{2, 3, \ldots, j-1\}$, we consider pairs $(y,w)$ where $y$ is an $r$-dangerous neighbour of $v$ and $w$ is a cop at distance $r$ from $y$. If $3\le r+1\le j$, then Lemma~\ref{l:elem}(ii) implies that there are at most $c$ paths between $v$ and $w$.  It follows that fewer than $c$ neighbours of $v$ are a distance $r$ from $w$. Estimating the number of pairs $(y,w)$ in two ways, we have
$$
\Big(\frac{d}{2cj} \Big)^{r-1}  \dang(r) \le c\cdot \pl_{r+1}^x(v) \le c\Big(\frac{d}{2cj} \Big)^r\,,
$$
and consequently $\dang(r)\le \frac{d}{2j}$.

Checking the desired bound for $\dang(j)$ is slightly more complicated. This time, a cop at distance $j+1$ from $v$ can contribute to the ``dangerousness'' of more than $c$ neighbours of $v$. However, the number of paths of length $j+1$ joining $v$ and $w$ is bounded from above by $\frac{6}{1-j \alpha}\frac{d^{j+1}}{n}$ (see Lemma~\ref{l:elem}(ii) and note that $d^{j+1} = n^{\alpha(j+1) + o(1)} \ge 7 n \log n$, since $\alpha(j+1) > 1$). Although we cannot control the number of cops in $N_{j+1}[v]$,  clearly $\pl_{j+1}^x(v)$ is bounded from above by $K=\frac {1-j\alpha}{12 (2c)^{j-1} j^j} \cdot \frac {n}{d}$, the total number of cops. Hence,
\begin{equation}\label{eq:calc_for_j}
\Big(\frac{d}{2cj} \Big)^{j-1}  \dang(j) \le \frac{6 d^{j+1}}{(1-j \alpha)n} \cdot K = c\Big(\frac{d}{2cj} \Big)^j
\end{equation}
and, as desired, $\dang(j)\le \frac{d}{2j}$. It follows that at most $d/2$ of neighbours of $v$ are $r$-dangerous for some $r=0,1,\dots, j$.

Now, it is time for the cops to make their move. Fortunately, only one cop may move, and this single cop can cause at most $c$ neighbours of $v$ to become $r$-dangerous for some $r < j$.
Finally, we may use Lemma~\ref{l:elem}(i) and~(iii) to infer that there is a neighbour $y$ of $v$ that is not $r$-dangerous for any $r=0,1,\dots,j$, and such that $x$ does not belong to the $j$th neighbourhood of $y$ in $\G(n,p)\setminus \{v\}$. The vertex $y$ is safe; we move the robber there. This completes the proof of~(\ref{eq:lower_gnp1}).

Suppose now that $\alpha = 1/(j+1)$ for some $j \in \N$. The argument for this case is quite similar, so we only mention the differences. We consider three cases. First, suppose $d^{j+1} \ge 7 n \log n$. Since the number of paths of length $j+1$ from $v$ to a vertex $w$ is bounded from above by the same value, namely $\frac{6}{1-j \alpha}\frac{d^{j+1}}{n}$ (see Lemma~\ref{l:elem}(ii)), the calculations (and hence also the bound) are exactly the same.

Second, suppose $n/\log n \le d^{j+1} < 7 n \log n$. In this case, the number of paths of length $j+1$ from $v$ to $w$ is bounded above by $\frac{42}{1-j \alpha} \log n$ (as before, see Lemma~\ref{l:elem}(ii)). Hence, we must replace~(\ref{eq:calc_for_j}) by
$$
\Big(\frac{d}{2cj} \Big)^{j-1}  \dang(j) \le \frac{42}{1-j \alpha} \log n \cdot K = c\Big(\frac{d}{2cj} \Big)^j,
$$
provided that $K$ is adjusted to be $\frac{c(1-j\alpha)}{42 (2cj)^j} \cdot \frac{d^j}{\log n}$.

Third, suppose $d^{j+1} < n/\log n$. This time, the adjustments are slightly more complicated, since we must control the number of cops within distance $j+1$ of the robber. In particular, we need to find a neighbour $y$ of $v$ that is not $r$-dangerous for any $r=0,1,\ldots,j+1$ and such that $x$ does not belong to the $(j+1)$th neighbourhood of $y$ in $\G(n,p)\setminus \set{v}$. We adjust the definition of being ``safe'' as follows: $\pl^x_0(v)=0$, $\pl^x_1(v)=0$, $\pl_{i}^x(v) \le \Big(\frac{d}{2c(j+1)} \Big)^{i-1}$ for every $i=2,3,\dots,j$, and $\pl_{j+1}^x(v) \le \Big(\frac{d}{2c(j+1)} \Big)^{j} \frac {c(1-j\alpha)}{42 \log n}$. This assures that $\dang(j)$ is bounded as needed, that is,
$$
\Big(\frac{d}{2c(j+1)} \Big)^{j-1}  \dang(j) \le \frac{42}{1-j \alpha} \log n \cdot \pl_{j+1}^x(v) = c\Big(\frac{d}{2c(j+1)} \Big)^j.
$$
Finally, the number of $(j+1)$-dangerous neighbours of $v$ can be given by
$$
\Big(\frac{d}{2c(j+1)} \Big)^{j} \frac {c(1-j\alpha)}{42 \log n} \cdot \dang(j+1) \le \frac{84}{1-j \alpha} \frac {d^{j+2}\log n}{n} \cdot K,
$$
so $\dang(j+1)\le \frac{d}{2(j+1)}$, provided that $K$ is adjusted to be $\frac{c^2(1-j\alpha)^2}{3528 (2c(j+1))^{j+1}} \cdot \frac{n}{d \log^2 n}$.
\end{proof}

\section{Hypercubes}

In \cite{oo}, Offner and Ojakian provided asymptotic lower and upper bounds on $c_L(Q_n)$.  More precisely, they showed that $c_L(Q_n)=\Omega(2^{\sqrt{n}/20}) $ and $c_L(Q_n) = O(2^n \log n / n^{3/2})$.  In this section, we asymptotically improve the lower bound.  Our main result is the following:
\begin{theorem}\label{thm:hyp-lower}
For all $\eps> 0$, we have that
\[c_L(Q_n) = \Omega\of{\frac{2^n}{n^{7/2 + \eps}}}.\]
\end{theorem}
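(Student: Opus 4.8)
The plan is to give the robber a strategy that wins against any $k = o\of{2^n/n^{7/2+\eps}}$ cops. Since the game on a finite graph is determined and, if the cops can force a capture at all, they can do so within a number of rounds bounded by the number of game states, it suffices to exhibit a (possibly randomized) robber strategy that survives with positive probability against every cop strategy. As in Theorem~\ref{thm:gnp_lower}, the robber maintains a \emph{safety} invariant ensuring that no cop is ever within distance $1$ of its vertex $r$, so it is never captured; the novelty, and the reason the probabilistic method enters, is that the invariant is encoded in a single real potential
\[
\Phi \;=\; \sum_{w \in C} \phi\big(d(w,r)\big),
\]
for a non-increasing weight $\phi\colon\{0,1,2,\dots\}\to[0,\infty)$ supported on distances at most some $D=D(n)$. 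The robber keeps $\Phi$ below a threshold $T$ with $\phi(1)\ge T$, so that $\Phi<T$ forces that no cop lies at distance $\le1$ from $r$.

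Two facts about $Q_n$ drive the analysis. First, $\size{N_i[v]} = \sum_{j\le i}\binom nj = (1+o(1))\binom ni$ for fixed $i$, and since $k\cdot\size{N_3[v]} \le (1+o(1))\,k\,n^3 = o\of{2^n/n^{1/2+\eps}} = o(2^n)$, the robber may start at a vertex at distance more than $3$ from every cop, making $\Phi$ initially tiny. That $\binom n3 \asymp n^3$ is small enough while $\binom n4 \asymp n^4$ is not is exactly what pins the relevant radius at $3$ and underlies the exponent $7/2$. Second, for a cop at distance $\ell$ from $v$, precisely $\ell$ of the $n$ neighbours of $v$ are at distance $\ell-1$ from the cop and the other $n-\ell$ are at distance $\ell+1$; so if the robber steps to a uniformly random neighbour, that cop's distance becomes $\ell-1$ with probability $\ell/n$ and $\ell+1$ otherwise. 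This ``Ehrenfest drift'' is what $\phi$ is tuned against.

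The heart of the argument is that the invariant survives a round. On the cops' move a single cop changes its distance to the stationary robber by at most $1$, so $\Phi$ rises by at most $\max_{\ell\ge2}\of{\phi(\ell-1)-\phi(\ell)}$. It then falls to the robber to restore $\Phi<T$ in one step, and here I would use the probabilistic method: the robber moves to a uniformly random neighbour among those neither occupied by nor adjacent to a cop --- the invariant bounds the number of forbidden neighbours (each nearby cop blocks only $O(1)$ of them, and there are few nearby cops because each contributes a non-negligible amount to the bounded $\Phi$), so this pool has size $(1-o(1))n$. By the Ehrenfest drift, a cop at distance $\ell$ has expected contribution change $\tfrac{\ell}{n}\phi(\ell-1)+\tfrac{n-\ell}{n}\phi(\ell+1)-\phi(\ell)$; the worst case for the cops is a cop stepping to distance $1$, and the robber's forced retreat to a different neighbour returns that cop to distance $2$, reversing its jump. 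With $\phi$ chosen so that, summed against the invariant's per-distance bounds on cop counts and against the distant cops (weighted by the small values $\phi(\ell)$ for large $\ell$), the total expected change is at most $0$, some admissible neighbour achieves $\Phi<T$, and the robber moves there. As in Theorem~\ref{thm:gnp_lower}, extra bookkeeping ensures the new vertex is genuinely safe and that the robber does not retreat into a region the cops are converging on; this uses Chernoff-type bounds, via a union bound over the finitely many relevant configurations, on how quickly distant cops can aggregate near $r$.

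The step I expect to be the main obstacle is the simultaneous calibration of $\phi$, $D$, and $T$: the weight must be large at distances $0,1$ to certify non-capture, it must decay fast enough that the up-to-$k$ cops the robber cannot individually track contribute only $o(T)$ in aggregate to $\Phi$, and it must still carry enough superharmonic slack in the Ehrenfest drift at every intermediate distance to absorb both the adversarial one-cop jump and the tendency of a random step to pull the robber toward roughly half of all cops at once. Reconciling these competing demands, and controlling the accumulation of distant cops over the (bounded but numerous) rounds by concentration and union bounds, is where the real work --- and the extra $n^{1/2+\eps}$ loss beyond the naive radius-$3$ bound $\Omega(2^n/n^3)$ --- comes from.
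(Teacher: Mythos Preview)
Your skeleton --- a distance-based potential, the Ehrenfest drift $\tfrac{\ell}{n}\phi(\ell-1)+\tfrac{n-\ell}{n}\phi(\ell+1)$, and the averaging argument to find a good neighbour --- is exactly right, and is what the paper does. But the calibration you propose is wrong in a way that would make the argument collapse.

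The cutoff is not $D\approx 3$; it must be $D=n/2-\rho$ with $\rho\sim\sqrt{n}$. The paper takes $\phi(i)=w_i=A\binom{n-1}{i}^{-1}\prod_{j\le i}(1+\eps_j)$ with $\eps_j=\frac{4+\eps}{n-2j-1}$, normalised so that $w_1=1$. The factor $\binom{n-1}{i}^{-1}$ is exactly harmonic for the Ehrenfest walk, and the small tilt $(1+\eps_j)$ manufactures the uniform superharmonic slack $w_C\le w_i(1-\tfrac{2+\eps/2}{n})$ at every distance. The exponent $7/2$ then comes entirely from the boundary term at $i=n/2-\rho$: Stirling gives $\binom{n-1}{n/2-\rho}^{-1}=\Theta(\sqrt{n}\,2^{-n})$, the accumulated tilt is $\prod_j(1+\eps_j)=O\big((n/\rho)^{2+\eps/2}\big)=O(n^{1+\eps/4})$, and the normalisation contributes a further factor $n$, so $w_{n/2-\rho}=\Theta(n^{5/2+\eps/4}/2^n)$. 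Since the boundary influx must be $O(1/n)$ to be absorbed by the $\Theta(1/n)$ drift, one needs $k\cdot w_{n/2-\rho}=O(1/n)$, i.e.\ $k=O(2^n/n^{7/2+\eps})$. Your radius-$3$ heuristic and the coincidence $k\binom{n}{3}=o(2^n)$ play no role.

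A short-range $\phi$ fails outright: if $\phi$ is supported on $\{0,1,2,3\}$, then for the drift at distance $3$ to be nonpositive you need $\phi(3)\ge\tfrac{3}{n}\phi(2)$, and similarly $\phi(2)\gtrsim\phi(1)/n$, so $\phi(3)\gtrsim T/n^2$. But all $k$ cops may sit at distance $4$, and after the robber's random step each enters the support with probability $4/n$, giving an expected influx $\Theta(k\phi(3)/n)\gtrsim kT/n^3$; absorbing this into the $O(T/n)$ drift forces $k=O(n^2)$, nowhere near $2^n/n^{7/2}$. Pushing the cutoff out to $n/2-\sqrt{n}$ is precisely how one makes the boundary weight exponentially small and hence tolerates exponentially many cops. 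Finally, no Chernoff or union bounds over configurations are needed; once the weights are set, the whole proof is a two-case computation (a cop steps adjacent versus not), with the robber choosing any neighbour achieving at most the expected potential.
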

Thus, the upper and lower bounds on $c_L(Q_n)$ differ by only a polynomial factor.
\begin{proof}
We present a winning strategy for the robber provided that the number of cops is not too large. Let $\eps > 0$ be fixed, and suppose there are $k = k(\eps)$ cops (where $k$ will be chosen later).
We introduce a potential function that depends on each cop's distance to the robber. Let $N_i$ represent the number of cops at distance $i$ from the robber. With $\rho = \rho(n) = o(n)$, a function to be determined later (but such that $n/2 -\rho$ is an integer), we let
\[P = \sum_{i=1}^{n/2-\rho}N_iw_i\]
where, for $1\le i \le  \frac{n}{2} - \rho$, 
$$w_i = A\cdot\binom{n-1}{i}^{-1}\prod_{j=1}^{i}\of{1+\eps_j},\quad A= \frac{n-1}{1+\eps_1},$$
and $$\eps_i = \frac{4+\eps}{n-2i - 1} = o(1).$$
Note that this potential function ignores all cops at distance more than $n/2 - \rho$. We say that a cop at distance $i$ from the robber has {\em weight} $w_i$; this represents that cop's individual contribution toward the potential.  In particular, we have that $w_1 = 1$ and $w_2 = (1+o(1))2/n$.
If the cops can capture the robber on their turn, then immediately before the cops' turn we must have $P\ge 1$, since some cop must be at distance 1 from the robber.
Suppose that before the cops make their move, the potential function satisfies
\begin{equation}\label{Pinv}
P \le 1-\frac{3}{n};
\end{equation}
note that there cannot be a cop adjacent to the robber. Initially, we may assume that all cops start at the same vertex; the robber places himself at any vertex at distance at least $n/2 - \rho$ from the cops. Initially, $P = 0$, so (\ref{Pinv}) holds.  Our goal is to show that the robber can always enforce (\ref{Pinv}) right before the cops' move, from which it would follow that the robber can evade the cops indefinitely.

\smallskip
\textbf{Case 1}. Suppose that on the cops' turn, a cop moves to some vertex adjacent to the robber, creating a ``deadly'' neighbour for the robber.
The robber's strategy is to move away from this ``deadly'' vertex, but to do so in a way that maintains the invariant~(\ref{Pinv}). To show that this is possible, we compute the expected change in the potential function if the robber were to choose his next position at random from among all neighbours other than the deadly one.

Consider a cop, $C$, at distance $i$ from the robber, where $2 \le i \le n/2-\rho$. Before the robber's move, $C$ has weight $w_i$. Let $w_C$ represent the expected weight of $C$ after the robber's move.  If $C$'s vertex and the deadly vertex differ on the deadly coordinate, then $w_C = \frac{i}{n-1}w_{i-1} + \frac{n-1-i}{n-1}w_{i+1}$, whereas if these coordinates coincide, then $w_C = \frac{i-1}{n-1}w_{i-1} + \frac{n-i}{n-1}w_{i+1}$. Since $w_{i-1} > w_{i+1}$, we may upper bound $w_C$ as follows:
\begin{align*}
w_C &\le \frac{i}{n-1}w_{i-1} + \frac{n-1-i}{n-1}w_{i+1}\\
&=\frac{i}{n-1}\cdot A \cdot\binom{n-1}{i-1}^{-1}\prod_{j=1}^{i-1}(1+\eps_j) + \frac{n-1-i}{n-1}\cdot A\cdot\binom{n-1}{i+1}^{-1}\prod_{j=1}^{i+1}(i+\eps_j) \\
&=\left(\frac{i}{n-1}(1+\eps_i)^{-1}\frac{(i-1)!(n-1-i+1)!}{(n-1)!}\,+ \right.\\
&\qquad \left.\frac{n-1-i}{n-1}(1+\eps_{i+1})\frac{(i+1)!(n-1-i-1)!}{(n-1)!}\right)\cdot A\cdot\prod_{j=1}^i(1+\eps_j)\\
&=w_i\of{\frac{n-i}{n-1}(1-\eps_i + o(\eps_i)) + \frac{i+1}{n-1}(1+\eps_i + o(\eps_i))}\\
&=w_i\of{1 + \frac{2}{n-1} - (1+o(1))\eps_i\of{\frac{n-i}{n-1} - \frac{i+1}{n-1}}}\\
&=w_i\of{1 + \frac{2}{n-1} - (1+o(1))\frac{4+\eps}{n-2i -1}\cdot\frac{n-2i-1}{n-1}} \le w_i\of{1 - \frac{2+\eps / 2}{n}}.
\end{align*}
Hence, after the robber's move, the expected sum of the weights of such cops has decreased by a multiplicative factor of at least $\of{1 - \frac{2+\eps/2}{n}}$, making it at most
\begin{equation}\label{eq:closeguys}
\of{1-\frac{3}{n}}\cdot\of{1-\frac{2+\eps/2}{n}}.
\end{equation}
 In addition, the cop that moved to the neighbourhood of the robber would again be at distance 2, making her weight
\begin{equation}\label{eq:dangguy}
w_2 = (1+o(1))\frac{2}{n}.
\end{equation} 
 It might also be that after the robber's move, some cops that were previously at distance $n/2-\rho+1$ from the robber are now at distance $n/2 -\rho$. 
By limiting the total number of cops, we may show that the total weight of cops at distance $n/2 - \rho$ is always negligible; that is, always less than, say, $\frac{\eps/4}{n}$.
The weight of a single cop at this distance is
\begin{equation}\label{eq:12}
w_{n/2 -\rho} = (1+o(1))n\cdot \binom{n-1}{n/2 - \rho}^{-1}\prod_{i=1}^{n/2 -\rho}\of{1 + \frac{4+\eps}{n-2i-1}}.
\end{equation}
We bound the product term in~(\ref{eq:12}) by
\begin{align*}
\prod_{i=1}^{n/2 - \rho}\of{1 + \frac{4+\eps}{n-2i-1}} &\le \exp\of{\sum_{i=1}^{n/2 - \rho}\frac{4+\eps}{n-2i-1}} \\
& = \exp\of{\frac{4+\eps}{2}\sum_{i=\rho}^{n/2}\frac{1}{i} + O(1)} \\
& = \exp\of{\frac{4+\eps}{2}\of{\log (n/2) - \log \rho + O(1)}}\\
&= O\of{\of{\frac{n}{\rho}}^{2+\eps/2}}.
\end{align*}
To bound the binomial term, we note that $\binom{n-1}{n/2-\rho} = \Theta\of{\binom{n}{n/2 -\rho}}$ and approximate: 
\begin{align*}
\binom{n}{n/2 - \rho}&=\frac{n!}{(n/2 - \rho)!(n/2 + \rho)!}\\
&=\frac{\sqrt{2\pi n}\of{\frac{n}{e}}^{n}}{\sqrt{2\pi (n/2 -\rho)}\of{\frac{n/2-\rho}{e}}^{n/2 -\rho} \sqrt{2\pi (n/2 +\rho)}\of{\frac{n/2+\rho}{e}}^{n/2 +\rho}   }(1+o(1))\\
&=\Theta\of{\frac{2^n}{\sqrt{n}}}\cdot \of{1 - \frac{2\rho}{n}}^{-\frac{n}{2} + \rho}\of{1+\frac{2\rho}{n}}^{-\frac{n}{2} - \rho} \\
&=\Theta\of{\frac{2^n}{\sqrt{n}}}\cdot \exp\of{-(1+o(1))\frac{2\rho}{n}\of{-\frac{n}{2} + \rho + \frac{n}{2} + \rho}} \\
&=\Theta\of{\frac{2^n}{\sqrt{n}}}\cdot \exp\of{-(1+o(1))\frac{4\rho^2}{n}}.
\end{align*}
Now take $\rho(n)$ to be minimal such that $\rho \ge \sqrt{n}$ and $n/2 - \rho$ is an integer.  Then we have that
\[w_{n/2 -\rho} = \Theta\of{n\cdot\frac{\sqrt{n}}{2^n}\cdot n^{1+\eps/4}} = \Theta\of{\frac{n^{5/2 + \eps/4}}{2^n}}.\]
So if we bound the total number of cops by $O(2^n / n^{7/2 + \eps})$, then we have that the total weight of cops at distance $n/2-\rho$ is at most
\begin{equation}\label{eq:newguys}
O\of{\frac{2^n}{n^{7/2 + \eps}}\cdot \frac{n^{5/2 + \eps/4}}{2^n}} < \frac{\eps/4}{n}.
\end{equation}
Thus, after the robber's random move, combining estimates \eqref{eq:closeguys}, \eqref{eq:dangguy} and \eqref{eq:newguys}, we can upper bound the total expected weight by 
\[\of{1-\frac{3}{n}}\cdot\of{1-\frac{2+\eps/2}{n}} + (1+o(1))\frac{2}{n} + \frac{\eps/4}{n}  \le 1 - \frac{3}{n} . \]
Hence some deterministic move produces a potential at least as low as the expectation, so the robber may maintain the invariant, as desired.

\smallskip
\textbf{Case 2}. Suppose now that a cop moves to a vertex at distance $i \ge 2$ from the robber. The resulting increase in the potential function is at most $2/n + o(1/n)$, so the new potential function has value at most $1 - 1/n + o(1/n)$.  Now, by the calculations from Case 1, the robber can move so that the total weight of all cops at distances $2$ through $n/2 - \rho$ decreases by a multiplicative factor of $(1- \frac{2+\eps}{n})$. Hence, after the robber's move, the potential is at most 
\[\of{1 - \frac{1}{n} + o\of{\frac{1}{n}}}\cdot\of{1 - \frac{2 + \eps/2}{n}} + \frac{\eps/4}{n} \le 1 - \frac{3}{n},\]
where we have once again taken into account the possibility of new cops at distance $n/2 - \rho$.
\end{proof}

\section{Graphs on surfaces}

The {\em genus} of a graph $G$ is the minimum genus of an orientable surface on which $G$ can be embedded without edge crossings. Graphs with genus 0 are the planar graphs, and it was shown in \cite{af} that planar graphs have cop number at most $3$. If $G$ has genus $g$, then it was proved in \cite{schr} that $c(G) \le \lfloor \frac{3g}{2}\rfloor +3.$ In the same paper, it was conjectured that $c(G)\le g+3.$

We conclude the paper with a straightforward asymptotic upper bound on $c_L$ for graphs with genus $g$.  We use a well-known separator result due to Gilbert, Hutchinson, and Tarjan~\cite{ght}.

\begin{theorem}[\cite{ght}]\label{thm:ght_sep}
Every $n$-vertex graph of genus $g$ contains set of at most $6\sqrt{gn} + 2\sqrt{2n} + 1$ vertices whose removal leaves a graph in which no component has more than $2n/3$ vertices.
\end{theorem}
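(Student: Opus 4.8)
The plan is to reduce the bounded-genus case to the planar case by surgery on the surface, and then invoke the classical planar separator theorem of Lipton and Tarjan, which supplies a separator of at most $2\sqrt{2n}$ vertices leaving components of at most $2n/3$ vertices. We may assume $G$ is connected, since otherwise we apply the argument to the unique component (if any) of size more than $2n/3$ and take the union of separators. Fix an embedding of $G$ in an orientable surface of genus $g$. When $g = 0$ there is nothing to prove beyond Lipton--Tarjan, so assume $g \ge 1$.

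First I would drive the genus down to zero by deleting a small family of vertices. The topological input is that a graph embedded in a surface of positive genus carries a non-separating (equivalently, non-null-homologous) cycle; moreover, for any spanning tree $T$, the fundamental cycles span the cycle space, so at least one of them must be non-separating, and deleting its vertices yields an embedded graph of genus at most $g-1$ (cutting along a non-separating simple closed curve and capping reduces the genus by one). To keep these cycles short I would take $T$ to be a breadth-first search tree from an arbitrary root, so that every fundamental cycle has length at most $2h+1$, where $h$ is the depth of the search. Iterating this at most $g$ times planarizes $G$.

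The crux is bounding the total number of deleted vertices by $6\sqrt{gn}$, and this is where I expect the main obstacle to lie. The key is the standard Lipton--Tarjan dichotomy applied at each stage: if some BFS level contains few vertices, that level is itself a cheap separator and we can stop; otherwise every level is large, which forces the depth $h$ to be small, and hence forces the fundamental cycles to be short. I would tune the threshold so that each of the $g$ successive non-separating cycles has length $O(\sqrt{n/g})$, giving a total deletion cost of $6\sqrt{gn}$ after an appropriate averaging over the band of levels feeding each cycle; the additive $+1$ absorbs a single rounding term or root vertex. Balancing ``a small level already separates'' against ``large levels make cycles short,'' and choosing the level bands so that the constant comes out to exactly $6$, is the delicate part of the argument.

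Finally, let $S_0$ denote the set of cycle vertices removed during planarization, so that $|S_0| \le 6\sqrt{gn} + 1$ and $G - S_0$ is planar on at most $n$ vertices. Applying the planar separator theorem to $G - S_0$ produces a set $S_1$ with $|S_1| \le 2\sqrt{2n}$ whose removal leaves every component of size at most $2n/3$. Then $S = S_0 \cup S_1$ is the desired separator of $G$: its size is at most $6\sqrt{gn} + 2\sqrt{2n} + 1$, and since $G - S$ is an induced subgraph of $(G - S_0) - S_1$, removing the extra vertices of $S_0$ only refines the components, so no component of $G - S$ exceeds $2n/3$. The only point left to verify is precisely this inheritance of the component-size guarantee, which is immediate.
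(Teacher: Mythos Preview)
The paper does not prove this statement at all: Theorem~\ref{thm:ght_sep} is quoted from Gilbert, Hutchinson, and Tarjan~\cite{ght} and used as a black box in the proof of Theorem~\ref{thm:bded_genus}. There is therefore nothing in the paper to compare your proposal against.

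That said, your sketch is broadly in the spirit of the original argument in~\cite{ght} (BFS levels, short non-separating fundamental cycles, planarize and then invoke Lipton--Tarjan), but the iteration step has a real gap. After you delete the vertices of the first non-separating cycle, the remaining graph may be disconnected or may have much larger BFS radius than before, so there is no reason the \emph{next} fundamental cycle should still have length $O(\sqrt{n/g})$; your ``tune the threshold'' paragraph silently assumes the depth bound persists across all $g$ stages. The actual proof in~\cite{ght} avoids this by working with a \emph{single} BFS tree of the original graph and using Euler's formula to exhibit, all at once, a family of $2g$ fundamental cycles whose removal planarizes the embedding; the level-band averaging is then performed once against this fixed tree, which is what makes the $6\sqrt{gn}$ constant come out. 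If you want to salvage the iterative version, you would need an argument that controls BFS depth after each deletion, and none is given here.
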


We obtain our bound on $c_L$ as a direct consequence of Theorem~\ref{thm:ght_sep}.

\begin{theorem}\label{thm:bded_genus}
For every $n$-vertex graph $G$ of genus $g$ we have $c_L(G) \le 60\sqrt{gn} + 20\sqrt{2n}= O(\sqrt{gn})$. 
\end{theorem}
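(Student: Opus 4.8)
The plan is to use the Gilbert--Hutchinson--Tarjan separator theorem recursively to decompose $G$ into small pieces, place a modest number of cops on the separator vertices, and then recurse on whichever component contains the robber. Concretely, I would first apply Theorem~\ref{thm:ght_sep} to find a separator $S_0$ with $|S_0| \le 6\sqrt{gn} + 2\sqrt{2n} + 1$ whose removal leaves components each of size at most $2n/3$. Note that every component of $G - S_0$ still has genus at most $g$ (genus does not increase on subgraphs), so the separator theorem applies again to each piece. Iterating, at level $t$ we are working inside a subgraph on at most $(2/3)^t n$ vertices, and we can separate it using at most $6\sqrt{g \cdot (2/3)^t n} + 2\sqrt{2 (2/3)^t n} + 1$ vertices.

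The key idea is that the cops never need to occupy the separators of more than one branch at a time. At the start, the cops occupy $S_0$; the robber is confined to a single component $H_1$ of $G - S_0$ (if he is ever on a separator vertex he is captured, and he can never cross a separator without passing through an occupied vertex, which is also a capture since exactly-one-cop-moves does not matter here---the separator is stationary and fully occupied). The cops then move, one at a time, to occupy a separator $S_1$ of $H_1$; crucially, while they are relocating, the robber remains trapped inside $H_1$ because $S_0$ is still fully occupied, so no cop needs to leave $S_0$ until a cop has been newly stationed on $S_1 \setminus S_0$, and we can reuse cops from $S_0$ for positions in $S_1$ that the robber's component does not border. Careful bookkeeping shows that at any moment the number of cops in play is at most the sum $\sum_{t \ge 0} \left( 6\sqrt{g (2/3)^t n} + 2\sqrt{2 (2/3)^t n} + 1 \right)$ over the active branch; since the geometric series $\sum_{t\ge0} (2/3)^{t/2} = \frac{1}{1 - \sqrt{2/3}} < 10$, this is at most $10 \cdot (6\sqrt{gn} + 2\sqrt{2n}) + O(\log n) \le 60\sqrt{gn} + 20\sqrt{2n}$, absorbing the lower-order $+1$ terms and the logarithmically many levels into the slack (the recursion stops once the active piece has constant size, at which point a single additional cop finishes, and the number of levels is $O(\log n) = o(\sqrt n)$).

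The main obstacle is the transition argument: making precise that the cops can shuffle from the old separator to the new one, one cop per round, without ever freeing the robber, and confirming that the peak number of simultaneously deployed cops really is bounded by the geometric sum rather than by something larger. The clean way to handle this is to show inductively that once the robber is confined to a component $H$ of size $m$ with a set $T$ of at most $10(6\sqrt{gm}+2\sqrt{2m}) + O(\log m)$ cops patrolling the "boundary structure" separating $H$ from the rest and all deeper separators along the active path, the cops can, keeping $H$ sealed throughout, re-deploy to confine the robber to a sub-component of size at most $2m/3$ while maintaining the same bound (with $m$ replaced by $2m/3$ inside it, which only shrinks the requirement). The base case is a component of bounded size, handled trivially. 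This induction, together with the convergence of the geometric series, yields $c_L(G) \le 60\sqrt{gn} + 20\sqrt{2n} = O(\sqrt{gn})$.
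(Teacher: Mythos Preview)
Your approach is correct and is essentially the same as the paper's: both apply the Gilbert--Hutchinson--Tarjan separator recursively and bound the total number of cops by the geometric series $\sum_{t\ge 0}(2/3)^{t/2}=\frac{1}{1-\sqrt{2/3}}<10$. The paper packages this as a short induction on $n$ in which the cops on each separator simply stay put for the rest of the game, so the ``transition'' obstacle you worry about never arises and no cop-reuse argument is needed; the numerical check $60\sqrt{2/3}+6<60$ (and likewise $20\sqrt{2/3}+2<20$) then closes the induction for every $n$, avoiding your $O(\log n)$ slack issue.
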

\begin{proof}
Let $K = 60\sqrt{gn} + 20\sqrt{2n} = O(\sqrt{gn})$; we use induction on $n$ to prove that $c_L(G) \le K$.  When $n = 1$, we have that $c_L(G) = 1$, so the bound holds.  Assume $n \ge 2$, and suppose that the bound holds for all graphs on fewer than $n$ vertices.

By Theorem~\ref{thm:ght_sep}, $G$ contains some separating set $S$ of cardinality at most $6\sqrt{gn} + 2\sqrt{2n} + 1$, such that each component of $G - S$ has at most $2n/3$ vertices.  The cops play as follows.  First, one cop occupies each vertex of $S$.  If the robber has not yet been captured, then he must inhabit some component $X$ of $G - S$.  The cops currently occupying vertices of $S$ remain in place for the duration of the game; consequently, the robber cannot leave $X$ without being captured.  The remaining $K - \size{S}$ cops now move to $X$ and, subsequently, attempt to capture the robber while remaining within $X$.  By choice of $X$ and the induction hypothesis, these cops may capture the robber so long as
$$K - \size{S} \ge 60\sqrt{g\frac{2n}{3}} + 20\sqrt{2\frac{2n}{3}}.$$
Since $\size{S} \le 6\sqrt{gn} + 2\sqrt{2n} + 1$, it suffices to show that
$$K \ge 60\sqrt{g\frac{2n}{3}} + 20\sqrt{2\frac{2n}{3}} + 6\sqrt{gn} + 2\sqrt{2n} + 1.$$
However,
\begin{align*}
60\sqrt{g\frac{2n}{3}} + 20\sqrt{2\frac{2n}{3}} + 6\sqrt{gn} + 2\sqrt{2n} + 1
   &= \left (60\sqrt{\frac{2}{3}}+6\right )\sqrt{gn} + \left (20\sqrt{\frac{2}{3}}+2\right )\sqrt{2n} + 1\\
   &< 55\sqrt{gn} + 19\sqrt{2n} + 1 < K,
\end{align*}
which completes the proof.
\end{proof}

It is not known whether the bounds in Theorem~\ref{thm:bded_genus} are asymptotically tight, even in the case of planar graphs.  In fact, we are not presently aware of any families of planar graphs on which the lazy cop number grows as an unbounded function.

\end{document}